\def\ri{\rightarrow}	
							\def\om{\omega}						\def\Om{\Omega}				
				\def\ti\Phi{\tilde{\Phi}}			\def\ri{\rightarrow}				
					\def\si{\sigma}										
															\def\cB{{\mathcal B}}
														\def\cF{{\mathcal F}}
																	\def\om{\omega}
\def\Om{\Omega}												
													\def\ti\Phi{\tilde{\Phi}}
\def\it{\italic}										\def\ovl{\overline}
				\def\PP{{\mathbb P}}									\def\RR{{\mathbb R}}
\newtheorem{theorem}{\bf Theorem}[section]
\newtheorem{definition}[theorem]{\bf Definition}
\newtheorem{corollary}[theorem]{\bf Corollary}
\newtheorem{lemma}[theorem]{\bf Lemma}
\newtheorem{proposition}[theorem]{\bf Proposition}
\newtheorem{example}[theorem]{\bf Example}
\font\it=cmti10
\begin{document}
\makeatletter	   
\renewcommand{\ps@plain}{%
     \renewcommand{\@oddhead}{\textrm{\it The existence of random solutions to random optimization problems}\hskip 3 cm \textrm{T. N. Anh}}%
    \renewcommand{\@evenfoot}{\@oddfoot}			}
\makeatother     
\title{The existence of random solutions to random optimization problems}
\author{Ta Ngoc Anh$^1$}
\date{\today}   
\maketitle
$^1${\it Department of Mathematics,  Le Quy Don technical University (LQDTU), No 236 Hoang Quoc Viet road, Cau Giay district, Hanoi, Vietnam.}  Email: {\it tangocanh@gmail.com}

\setcounter{page}{1}
\begin{abstract} 
	In this paper we study random optimization problems where random functions are investigated in sample paths. Some sufficient conditions ensuring the existence of random solutions to random optimization problems are proposed. 
\vskip 0.5cm
\noindent{\bf MSC 2010}: Primary 90C15; Secondary: 60H25, 47H40.\\
{\bf  Keywords and phrases }: Random optimization, stochastic optimization, stochastic programming, sample path optimization, random operator, random function.
\end{abstract}
\pagestyle{plain}

\section{Introduction and preliminaries} 		      
Let $f:\RR^n\to \RR$ be a real function. In many situations, we have to consider the problem $\underset{x\in C}{Min} f( x)$, where $C$ is a subset of $\RR^n$ defined by some specified constraints. In fact, by the action of many random factors, the function $f$ and the set $C$ are not determined clearly. It is suitable to consider a random function $f: \Omega\times\RR^n\to \RR$ and random set $C(\omega)$ instead of deterministic function $f$ and fixed set $C$, successively. This leads to consider the problem of finding minima of a random function on a random set, that is called random optimization problem. In recent years, random optimization problems have received increasing attention from both optimization and probability communities. There are some approaches to this problem depending on the ways random functions are examined. Some authors consider the problem  $\underset{x\in C(\omega)}{Min} f(\omega, x)$ where $f(\omega, x)$ is a random function, $C(\omega)$ is a random set. In this problem, random function $f(\omega,x)$ is investigated in sample paths (see \cite{AEL}, \cite{GOR}, \cite{GR}, \cite{SW}, \cite{TY}, \cite{V2}). Some ones examine stochastic programming problem  $\underset{x\in C}{Min} E[f(\omega, x)]$ where $E[f (\omega,x)]$ is the expectation of random function $f(\omega,x)$. Here, random function $f(\omega,x)$ is considered in expected values (see \cite{CF},\cite{LL}, \cite{S}). In two-stage stochastic programming problems, random functions could be investigated both in sample paths and in mean values depending on different stages (see \cite{AHP}, \cite{SDR} and references therein). 
\newpage

In this paper, we consider the problems of finding minima of  random functions on random sets in metric spaces. We will investigate random functions in sample paths. By some aspects, our approach leads to consider a family of deterministic optimization problems indexed by sample space. An important problem in view point of probability is to consider the existence of measurable minimizers for random functions. 
We give some results about measurability of optimal values  and prove the existence of measurable minimizers for some random functions. Especially, we give the sufficient conditions for the existence of measurable minimizers of twice continuously differential random functions defined on finite dimensional Euclidean space $\RR^n$. A bit difference from the deterministic case, the positive definiteness of Hessian matrix of second partial derivatives evaluated at a stationary point doesn't ensure stationary point to become a random solution to local random optimization problem of a twice continuously differential random function. 

Let  $(\Omega, \mathcal F, \PP)$ be a probability space and $X$ be a completely separable metric space (Polish space).  We denote by $\cB(X) $ the Borel $\sigma$-algebra of $X$,  by $2^X$ the family of all non empty subsets of $X$,  by $C(X)$ the family of all non empty closed subsets of $X$ and by $K(X)$ the family of all compact subsets of $X$.  The product $\si-$algebra on $\Omega\times X$  is denoted by $\cF\otimes\cB(X)$. \par
A mapping $\xi: \Omega \to X$ is called an  $X$-valued random variable if   $\xi^{-1}(B)=\{\omega\in \Omega |\xi(\omega)\in B\}\in \cF$ for any $B\in \cB(X)$.  A set-valued mapping  $C:\Omega\to 2^X$  is said to be measurable  if $C^{-1}(B)=\{\omega\in \Omega | C(\omega)\cap B\not=\emptyset\}\in \cF$ for each open subset $B$ of $X.$ The graph of $C$ is defined by $Gr(C)=\{(\omega, x)|\omega\in \Omega, x\in C(\omega)\}.$  A function $\xi:\Omega\to X$ will be said to be a selection of $C:\Omega\to 2^X$ if $\xi(\omega)\in C(\omega)$ for every $\omega\in\Omega$. The following theorem shows the existence of measurable selections of a measurable set-valued mapping. 

\begin{theorem}[\cite{CV}, Theorem III.8]\label{III.8CV}
Let $(\Omega, \mathcal F, \PP)$ be a probability space, $X$ be a Polish space and $C:\Omega\to C(X)$ be a measurable mapping. Then there exists a sequence of measurable selections $\{\xi_n\}$ of $C$ such that $C(\omega)=\overline{\{\xi_n(\omega)\}}$ for all $\omega\in\Omega$. 
\end{theorem}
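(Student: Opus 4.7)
The plan is to construct the sequence $\{\xi_n\}$ by saturating a countable dense subset of $X$ with measurable selections that approach every point of $C(\omega)$; this is the classical Castaing representation, and I will outline the standard route. First I would fix a countable dense set $\{x_k\}_{k\in\NN}$ of $X$, available because $X$ is Polish, and write $B(x_k,r)$ and $\overline{B}(x_k,r)$ for the open and closed balls of radius $r$.

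For each pair $(n,k)\in\NN\times\NN$ I would consider the closed-valued map
\[
C_{n,k}(\omega) = C(\omega)\cap \overline{B}(x_k,1/n)
\]
together with the domain
\[
D_{n,k} = \{\omega\in\Omega : C(\omega)\cap B(x_k,1/n)\neq\emptyset\}.
\]
Because $C$ is measurable and $B(x_k,1/n)$ is open, $D_{n,k}\in\cF$ directly from the definition. Next I would verify that, restricted to $D_{n,k}$, the map $C_{n,k}$ is itself a measurable closed-valued multifunction: for any open $U\subset X$, the preimage $\{\omega\in D_{n,k}:C_{n,k}(\omega)\cap U\neq\emptyset\}$ can be written as a countable union of sets of the form $\{\omega:C(\omega)\cap V\neq\emptyset\}$ for open balls $V\subset U\cap B(x_k,1/n+\varepsilon)$, and so lies in $\cF$.

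Having this, I would apply the Kuratowski--Ryll-Nardzewski measurable selection theorem (valid on Polish $X$) to produce a measurable selection $\eta_{n,k}:D_{n,k}\to X$ of $C_{n,k}$, and separately a measurable selection $\xi_0:\Omega\to X$ of $C$. Setting
\[
\xi_{n,k}(\omega)=\begin{cases}\eta_{n,k}(\omega) & \omega\in D_{n,k},\\ \xi_0(\omega) & \omega\notin D_{n,k},\end{cases}
\]
gives a measurable selection of $C$ defined on all of $\Omega$. After re-indexing the countable family $\{\xi_0\}\cup\{\xi_{n,k}:n,k\in\NN\}$ as $\{\xi_n\}_{n\in\NN}$, the density claim is routine: given $\omega$, $y\in C(\omega)$ and $m\in\NN$, choose $k$ with $d(x_k,y)<1/(2m)$. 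Then $\omega\in D_{2m,k}$, whence $d(\xi_{2m,k}(\omega),y)\le d(\xi_{2m,k}(\omega),x_k)+d(x_k,y)<1/m$. Since each $\xi_n(\omega)\in C(\omega)$ and $C(\omega)$ is closed, this gives $C(\omega)=\overline{\{\xi_n(\omega)\}}$.

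The main obstacle is the base step: producing at least one measurable selection $\xi_0$ and the auxiliary selections $\eta_{n,k}$. This is exactly the Kuratowski--Ryll-Nardzewski theorem, whose proof is itself a careful successive-approximation argument exploiting separability and completeness of $X$; a secondary technicality is the bookkeeping that shows intersection with a closed ball preserves measurability in the Effros sense. Everything beyond these two ingredients is standard manipulation in a separable metric space.
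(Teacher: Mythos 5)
This theorem is quoted by the paper from Castaing--Valadier (Theorem III.8) and the paper supplies no proof of its own, so your sketch can only be measured against the classical Castaing-representation argument --- which is indeed the route you take: truncate $C$ near each point of a countable dense set, select measurably from each truncation, patch with a global selection, and reindex. The overall architecture, the use of the sets $D_{n,k}=\{\omega : C(\omega)\cap B(x_k,1/n)\neq\emptyset\}\in\cF$, the appeal to Kuratowski--Ryll-Nardzewski for the base selections, and the final density estimate $d(\xi_{2m,k}(\omega),y)<1/m$ are all correct and standard.

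There is, however, one genuine gap: the measurability of $\omega\mapsto C_{n,k}(\omega)=C(\omega)\cap\overline{B}(x_k,1/n)$ on $D_{n,k}$. You assert that $\{\omega : C_{n,k}(\omega)\cap U\neq\emptyset\}$ equals a countable union of sets $\{\omega : C(\omega)\cap V\neq\emptyset\}$ with $V$ an open ball inside $U\cap B(x_k,1/n+\varepsilon)$; but that union equals $\{\omega : C(\omega)\cap U\cap B(x_k,1/n+\varepsilon)\neq\emptyset\}$, which strictly contains the set you want, and intersecting over $\varepsilon=1/m$ does not recover it without compactness. For instance, in $X=\ell^2$ take $C(\omega)=\{(1+1/m)e_m : m\in\NN\}$, $x_k=0$, $1/n=1$: then $C(\omega)\cap B(0,1+1/m)\neq\emptyset$ for every $m$ while $C(\omega)\cap\overline{B}(0,1)=\emptyset$. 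So intersecting a measurable closed-valued map with a closed ball is exactly the delicate point, and your justification fails in a general (non--locally compact) Polish space. The standard repair, and the one used in the classical proof, is to replace $C(\omega)\cap\overline{B}(x_k,1/n)$ by $\overline{C(\omega)\cap B(x_k,1/n)}$ on $D_{n,k}$ (and $C(\omega)$ itself off $D_{n,k}$): since for open $U$ one has $\overline{A}\cap U\neq\emptyset$ if and only if $A\cap U\neq\emptyset$, the preimage of $U$ becomes $\{\omega : C(\omega)\cap(U\cap B(x_k,1/n))\neq\emptyset\}\in\cF$ with no further argument, the values are still nonempty closed subsets of $C(\omega)$ contained in $\overline{B}(x_k,1/n)$, and the rest of your proof, including the $1/(2m)+1/(2m)$ density estimate, goes through verbatim.
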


\begin{theorem} [\cite{HM}, Theorem 3.5]\label{3.5HM}
Let $(\Omega, \mathcal F, \PP)$ be a complete probability space, $X$ be a Polish space and $C:\Omega\to C(X)$. Then $C$ is a measurable mapping if and only if $Gr(C)\in \cF\otimes\cB(X)$.
\end{theorem}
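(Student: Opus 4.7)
The plan is to prove both implications using the two natural tools suggested by the surrounding material: the Castaing representation given in Theorem~\ref{III.8CV} for one direction, and the measurable projection theorem (which crucially invokes completeness of the probability space) for the other.

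For the forward implication, assume $C$ is measurable. By Theorem~\ref{III.8CV}, I would pick a sequence $\{\xi_n\}$ of measurable selections with $C(\omega)=\overline{\{\xi_n(\omega):n\in\NN\}}$ for every $\omega\in\Omega$. Since $C(\omega)$ is closed, one has $x\in C(\omega)$ if and only if $\inf_n d(x,\xi_n(\omega))=0$, so the graph can be rewritten as
$$Gr(C)=\bigcap_{k=1}^\infty\bigcup_{n=1}^\infty\Bigl\{(\omega,x)\in\Omega\times X:d(x,\xi_n(\omega))<\tfrac{1}{k}\Bigr\}.$$
The map $(\omega,x)\mapsto(\xi_n(\omega),x)$ is $\cF\otimes\cB(X)$-measurable into $X\times X$, and composing with the continuous metric $d:X\times X\to\RR$ shows that $(\omega,x)\mapsto d(x,\xi_n(\omega))$ is $\cF\otimes\cB(X)$-measurable. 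Hence each member of the union lies in $\cF\otimes\cB(X)$, and so does $Gr(C)$.

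For the converse, assume $Gr(C)\in\cF\otimes\cB(X)$. For any open $B\subset X$,
$$C^{-1}(B)=\{\omega\in\Omega:C(\omega)\cap B\neq\emptyset\}=\pi_\Omega\bigl(Gr(C)\cap(\Omega\times B)\bigr),$$
where $\pi_\Omega$ is the projection onto $\Omega$. The intersection lies in $\cF\otimes\cB(X)$, and the measurable projection theorem — which applies precisely because $X$ is Polish and $(\Omega,\cF,\PP)$ is complete — guarantees that its $\pi_\Omega$-image belongs to $\cF$. Therefore $C^{-1}(B)\in\cF$ for every open $B$, i.e.\ $C$ is measurable.

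The main obstacle is the backward direction: projections of sets in $\cF\otimes\cB(X)$ are in general only analytic, not Borel, so the passage from $Gr(C)\in\cF\otimes\cB(X)$ to $C^{-1}(B)\in\cF$ genuinely relies on the universal measurability of analytic sets, which is where the completeness hypothesis on $(\Omega,\cF,\PP)$ enters. The forward direction is by comparison routine once the Castaing representation is in hand.
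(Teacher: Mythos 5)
The paper states this result only as a citation to Himmelberg (Theorem 3.5) and supplies no proof of its own, so there is no internal argument to compare against. Your proof is correct and is the standard one: the forward direction via the Castaing representation of Theorem~\ref{III.8CV} and the joint measurability of $(\omega,x)\mapsto d(x,\xi_n(\omega))$ (separability of $X$ being what makes the product of Borel $\sigma$-algebras equal the Borel $\sigma$-algebra of the product), and the converse via the measurable projection theorem --- the same Castaing--Valadier Theorem~III.23 that the paper itself invokes in proving Theorem~\ref{mm} --- which is indeed exactly where completeness of $(\Omega,\cF,\PP)$ is needed.
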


We recall the concept of  random function. 
\begin{definition}
Let  $(\Omega, \mathcal F, \PP)$ be a probability space and $X$ be a separable metric space.
\begin{enumerate}
\item A mapping $f:\Omega\times X\to \RR$  is called a random function on $X$ if for each $x\in X$, the mapping $f(.,x) $ is a real-valued random variable. 
\item  The random function $f:\Omega\times X\to \RR$ is said to be measurable if $f(\omega, x)$  is  jointly measurable w.r.t. $\omega$ and $x$, that is $f^{-1}(B)=\{(\omega,x)\in \Omega\times X|f(\omega,x)\in B\}\in \cF\otimes\cB(X)$ for any $B\in \cB(\RR)$.
\item The random function $f:\Omega\times X\to \RR$ is said to be continuous if for each $\omega$  the function $ f(\omega,.)$  is continuous.
\end{enumerate}
\end{definition}
As the following theorem shows that a continuous random function is also a measurable one but converse is not true in general.
\begin{theorem}[\cite{HM},Theorem 6.1]\label{6.1HM}
Let $(\Omega, \mathcal F, \PP)$ be a probability space, $X$ be a separable metric space, $Y$ be a metric space, and let $f: \Omega\times X\to Y$ be measurable in $\omega$  and continuous in $x$. Then $f$ is measurable. 
\end{theorem}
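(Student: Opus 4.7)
The plan is to use the separability of $X$ to approximate $f$ by a sequence of jointly measurable functions that are piecewise constant in $x$, and then pass to the limit using continuity.

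First I would fix a countable dense subset $\{x_k\}_{k \ge 1}$ of $X$. For each $n \ge 1$, I would form the disjoint Borel partition $A_{n,k}=B(x_k,1/n)\setminus\bigcup_{j<k}B(x_j,1/n)$ of $X$ and define a "rounding" map $\phi_n:X\to X$ by $\phi_n(x)=x_k$ whenever $x\in A_{n,k}$. Then I set $f_n(\om,x):=f(\om,\phi_n(x))$.

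Next I would verify that each $f_n$ is jointly measurable. Since $\phi_n$ takes only countably many values, for every open $U\su Y$,
\begin{equation*}
f_n^{-1}(U)=\bigcup_{k\ge 1}\bigl(\{\om\in\Om:f(\om,x_k)\in U\}\times A_{n,k}\bigr).
\end{equation*}
The first factor lies in $\cF$ by the hypothesis that $f(\cdot,x_k)$ is $\cF$-measurable, the second lies in $\cB(X)$, so $f_n^{-1}(U)\in\cF\otimes\cB(X)$; passing to the generated $\sigma$-algebra gives joint measurability of $f_n$.

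Then I would show pointwise convergence $f_n(\om,x)\to f(\om,x)$ for every $(\om,x)$. By construction $x\in A_{n,k}$ forces $x\in B(x_k,1/n)$, so $d(x,\phi_n(x))<1/n\to 0$; continuity of $f(\om,\cdot)$ then yields $f(\om,\phi_n(x))\to f(\om,x)$.

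Finally, I would conclude that $f$ is measurable as the pointwise limit of the jointly measurable maps $f_n$ into the metric space $Y$. The one point that needs a small argument, rather than a one-line invocation, is this last step: for open $U\su Y$ one has to write, using $U_m=\{y\in Y:d(y,Y\se U)>1/m\}$,
\begin{equation*}
f^{-1}(U)=\bigcup_{m\ge 1}\bigcup_{N\ge 1}\bigcap_{n\ge N}f_n^{-1}(U_m),
\end{equation*}
and check the two inclusions using the triangle inequality and the fact that $U=\bigcup_m U_m$. This is the only place where the metric structure of $Y$ (rather than just Borel structure) is really used, and it is the main technical obstacle; the rest is bookkeeping. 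Once this stability of measurability under pointwise limits into a metric space is in hand, the theorem follows.
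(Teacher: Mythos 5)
Your proof is correct, and it is the standard argument for this classical Carath\'eodory-measurability result: discretize $x$ over a countable dense set to get jointly measurable approximants $f_n$, then recover $f$ as their pointwise limit via continuity, with the set-theoretic identity $f^{-1}(U)=\bigcup_{m}\bigcup_{N}\bigcap_{n\ge N}f_n^{-1}(U_m)$ handling the limit step. Note that the paper itself offers no proof here --- the theorem is quoted verbatim from Himmelberg's \emph{Measurable relations} (Theorem 6.1), where essentially this same approximation argument is carried out --- so there is nothing in the paper to diverge from; your write-up fills in that external proof correctly.
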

\begin{theorem}[\cite{HM},Theorem 6.4]\label{6.4HM}
Let $(\Omega, \mathcal F, \PP)$ be a complete probability space, $X$ be a Polish space, $Y$ be a metric space, and let $f: \Omega\times X\to Y$ be measurable in $\omega$ and continuous in $x$. Then, for any closed subset $B$ of $Y$, $\omega\mapsto\{x\in X|f(\omega,x)\in B\}$ defines a measurable mapping from $\Omega$ to $2^X$. 
\end{theorem}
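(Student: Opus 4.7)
The plan is to reduce measurability of $C(\omega) := \{x\in X : f(\omega,x)\in B\}$ to a graph-measurability statement and then invoke Theorem \ref{3.5HM}. As a first step, I would observe that for each fixed $\omega$, the set $C(\omega)$ is closed in $X$: it is the preimage of the closed set $B\subseteq Y$ under the continuous map $f(\omega,\cdot):X\to Y$. Thus $C$ is a set-valued mapping from $\Omega$ into $C(X)$ (with the understanding that an empty value is permissible, since the measurability condition $\{\omega : C(\omega)\cap U\neq\emptyset\}\in\cF$ is well-defined either way).

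Next I would identify the graph of $C$ with the preimage of $B$ under $f$ itself:
$$Gr(C)=\{(\omega,x)\in\Omega\times X : x\in C(\omega)\}=\{(\omega,x):f(\omega,x)\in B\}=f^{-1}(B).$$
Because $f$ is measurable in $\omega$ and continuous in $x$, Theorem \ref{6.1HM} gives joint measurability of $f$ with respect to $\cF\otimes\cB(X)$. Since $B$ is closed, hence Borel in $Y$, this yields $f^{-1}(B)\in\cF\otimes\cB(X)$, so $Gr(C)\in\cF\otimes\cB(X)$.

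Finally, the completeness of $(\Omega,\cF,\PP)$ allows me to apply Theorem \ref{3.5HM}: for a closed-valued map into a Polish space, measurability is equivalent to the graph being $\cF\otimes\cB(X)$-measurable. Combining the two preceding steps, $C:\Omega\to C(X)$ is measurable, which is the desired conclusion.

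The proof has essentially no serious obstacle once one recognizes that the right packaging is to go through the graph; the two heavy lifts (joint measurability of $f$, and the graph-measurability characterization) are precisely Theorems \ref{6.1HM} and \ref{3.5HM}, already at hand. The only subtlety worth flagging is the mild notational friction that $2^X$ in this paper is defined to consist of nonempty subsets, so one should clarify that an empty fiber $C(\omega)$ poses no problem for the measurability definition, and that completeness of the probability space (rather than, say, mere separability of $X$) is the indispensable hypothesis that unlocks Theorem \ref{3.5HM}.
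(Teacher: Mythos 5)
This statement is quoted from Himmelberg's paper and is not proved in the text, so there is no in-paper argument to compare against; judged on its own, your proof is the standard derivation of the result from the two cited ingredients and is essentially correct. The one point you flag but do not actually resolve is real: Theorem \ref{3.5HM} as stated here applies only to maps into $C(X)$, the \emph{nonempty} closed subsets, whereas the fiber $C(\omega)=\{x : f(\omega,x)\in B\}$ may be empty for some $\omega$, so the graph characterization cannot be invoked verbatim. To close this, set $\Omega_0=\{\omega : C(\omega)\neq\emptyset\}=Proj_{\Omega}\bigl(f^{-1}(B)\bigr)$; by the projection theorem for complete probability spaces (the same Castaing--Valadier result used in the proof of Theorem \ref{mm}), $\Omega_0\in\cF$. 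Applying Theorem \ref{3.5HM} to the restriction of $C$ to $\Omega_0$, which is a genuinely $C(X)$-valued map whose graph is $f^{-1}(B)\cap(\Omega_0\times X)\in\cF\otimes\cB(X)$, gives for every open $U\subseteq X$ that $\{\omega : C(\omega)\cap U\neq\emptyset\}=\{\omega\in\Omega_0 : C(\omega)\cap U\neq\emptyset\}\in\cF$, which is exactly the asserted measurability. With that patch everything you wrote goes through: the fibers are closed because $f(\omega,\cdot)$ is continuous and $B$ is closed, and $Gr(C)=f^{-1}(B)$ is product-measurable by Theorem \ref{6.1HM} since $B$ is Borel.
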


Let  $f: \Omega\times X\to \RR$ be a random function and $\eta$ be a real-valued random variable. An equation of the form 
\begin{equation}\label{eq:1}
f(\omega, x)=\eta(\omega)
\end{equation}
 is called a random equation. We say that the equation \eqref{eq:1} has a deterministic solution if there exists a mapping $\xi:\Omega \to X$  such that $f(\omega,\xi(\omega))=\eta(\omega)$ for each $\omega\in\Omega.$ If in addition $\xi$ is an $X-$valued random variable then we call $\xi$ a random solution of equation  \eqref{eq:1}. In general, a random equation having deterministic solutions may not have a random solution. The following theorem will be used to prove the existence of random solutions to random optimization problems. \par
 
\begin{theorem}[ \cite{TA}, Theorem 2.3]\label{2.3TA}
Let $(\Omega, \mathcal F, \PP)$ be a complete probability space, $X$ be a Polish space. Suppose that $f:\Omega\times X \rightarrow \RR$  is a measurable random function and $\eta:\Omega\to \RR$ is a random variable. Then, the random equation $f(\omega, x)=\eta(\omega)$ has a random solution if and only if it has a deterministic one.
\end{theorem}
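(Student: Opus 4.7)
The direction ``random solution $\Rightarrow$ deterministic solution'' is trivial, as any random variable is in particular a mapping $\Omega\to X$. The content lies in the converse: given a (not necessarily measurable) $\xi_0:\Omega\to X$ with $f(\omega,\xi_0(\omega))=\eta(\omega)$ for every $\omega\in\Omega$, produce a measurable mapping with the same property.

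The strategy is a measurable-selection argument applied to the solution-set mapping
$$C(\omega)=\{x\in X:f(\omega,x)=\eta(\omega)\},\qquad \omega\in\Omega.$$
Setting $g(\omega,x)=f(\omega,x)-\eta(\omega)$, the joint measurability of $f$ together with the identity $\{(\omega,x):\eta(\omega)\in B\}=\eta^{-1}(B)\times X$ for $B\in\cB(\RR)$ show that $g$ is $\cF\otimes\cB(X)$-measurable. Hence
$$Gr(C)=g^{-1}(\{0\})\in\cF\otimes\cB(X),$$
and the existence of the deterministic solution $\xi_0$ forces $C(\omega)\ne\emptyset$ for every $\omega$, so $Gr(C)$ projects onto all of $\Omega$.

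The main obstacle is that $C(\omega)$ need not be closed: since $f(\omega,\cdot)$ is only assumed measurable, not continuous, its level sets can fail to be closed, and Theorem \ref{III.8CV} (which selects from closed-valued measurable mappings) is not directly applicable. One cannot repair this by falling back on Theorem \ref{3.5HM} either, for the same reason. The correct tool is the Aumann--von Neumann measurable projection/selection theorem: on a complete probability space $(\Omega,\cF,\PP)$, every $A\in\cF\otimes\cB(X)$ with $X$ Polish satisfies $\pi_\Omega(A)\in\cF$ and admits an $\cF/\cB(X)$-measurable section on $\pi_\Omega(A)$. This is precisely the place where the completeness hypothesis in the statement is used. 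Applying it to $A=Gr(C)$ yields a measurable $\xi:\Omega\to X$ with $(\omega,\xi(\omega))\in Gr(C)$ for every $\omega$, i.e.\ $f(\omega,\xi(\omega))=\eta(\omega)$, which is the required random solution.
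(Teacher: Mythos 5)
The paper does not prove this statement itself: it is quoted verbatim from \cite{TA} (Theorem 2.3 there) and used as a black box, so there is no in-paper argument to compare against. Your proof is correct and is the standard route: joint measurability of $g(\omega,x)=f(\omega,x)-\eta(\omega)$ gives $Gr(C)\in\cF\otimes\cB(X)$, the deterministic solution gives $C(\omega)\neq\emptyset$ for every $\omega$, and the von Neumann--Aumann selection theorem for multifunctions with measurable graph over a complete probability space (Theorem III.22 of Castaing--Valadier, the companion to the projection theorem III.23 that the paper does invoke elsewhere) yields a measurable selection, i.e.\ a random solution. You are also right to flag that Theorem \ref{III.8CV} is inapplicable because the level sets $C(\omega)$ need not be closed when $f(\omega,\cdot)$ is merely measurable, and that completeness of $\cF$ is exactly what the selection/projection machinery consumes.
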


\section{Random optimization problems}

Let $(\Omega, \mathcal F, \PP)$ be a complete probability space, $X$ be a metric space and $f: \Omega\times X\to \RR$ be a random function. The random optimization problem denoted by $ROP(f,X)$ is stated as follows: Find a mapping $\xi: \Omega\to X$ such that 
\begin{equation}\label{rop}
\underset{x\in X}{Inf} f(\omega, x)=f(\omega,\xi(\omega))\ \ \  \forall \omega\in \Omega.
\end{equation}
The mapping $\xi$ satisfying \eqref{rop} is called a deterministic solution to $ROP(f,X)$. If in addition $\xi$ is a random variable, then it is called a random solution to  $ROP(f,X)$. Clearly,  if  a random optimization problem has a random solution then it also has a deterministic one. Naturally, some the following questions arise: 
\begin{enumerate}
\item[(Q1)] If a random optimization problem has at least one deterministic solution, does this imply the existence of a random solution?  
\item[(Q2)] Is a unique deterministic solution also a random one?
\item[(Q3)] In what conditions the answer to (Q1) affirmative?
\end{enumerate}
The following example shows that the answers to questions (Q1) and (Q2) are negative. The answer to the question (Q3) is given in the Theorem \ref{ms}. 
\begin{example}\label{rs} 
Let $\Om=[0;1]$ and $\cF$ be the family of subsets $A\subset \Omega$ with the property that either $A$ is countable or the complement $A^c$ is countable.  Define a probability measure $\PP$ on $\cF$ by
\begin{equation*} \PP(A)=\begin{cases} 0&\mbox{if $A$ is  countable}\\
1&\mbox{otherwise.}\end{cases}\end{equation*}
It is clear that $(\Omega,\cF, \PP)$ is a complete probability space. Define random function $f:\Omega\times \RR\to \RR$ by
 \begin{equation*} 
f(\omega,x)= \begin{cases}\frac{x^2}{x^2+1}&\mbox{if $\om=x$ }\\
1&\mbox{if $\omega\not=x$.}\end{cases}
\end{equation*}
\end{example}
It is easy to verify that $f$ is random function and for each $\omega\in\Omega$, $\underset{x\in \RR}{Inf} f(\omega, x)= \dfrac{\omega^2}{\omega^2+1}$
 as $x=\omega$. Thus $\xi(\omega)=\omega$ is a unique deterministic solution to $ROP(f,\RR)$. For  $B=[0;1/2)\in\cB(\RR)$, we have $\xi^{-1}(B)=[0;1/2)\notin\cF$. Thus, $\xi$ is not a random variable and $ROP(f,\RR)$ has not a random solution. \par

Now we investigate the measurability of the optimal values of random functions. Define the mapping $\eta:\Omega\to\RR$ by $\eta(\omega)=\underset{x\in \RR}{Inf} f(\omega, x)$ for all $\omega\in\Omega$.  We have $\eta(\omega)=\dfrac{\omega^2}{\omega^2+1}\forall\omega\in\Omega$. However, $\eta$ is not a random variable. Indeed, we have \[\eta^{-1}\big(0;\frac15\big)=\big\{\omega|\dfrac{\omega^2}{\omega^2+1}<\frac15\big\}=\big[0;\frac12\big)\notin \cF.\]
The following theorem gives a sufficient condition on $f$ ensuring the measurability of minima of a random function.

\begin{theorem}\label{mm} Let $(\Omega, \mathcal F, \PP)$ be a complete probability space and $X$ be a Polish space.  Suppose that $f: \Omega\times X\to \RR$ is a measurable random function and $C:\Omega\to C(X)$ is a measurable mapping. Then the mapping $\eta:\Omega\to \RR$ defined by
\begin{equation}\label{rop1}
\eta(\omega)=\underset{x\in C(\omega)}{Inf} f(\omega, x)\ \ \  \forall \omega\in \Omega
\end{equation}
is a random variable.
\end{theorem}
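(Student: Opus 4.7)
My plan is to show that for every $c\in\RR$ the sublevel set $\{\omega:\eta(\omega)<c\}$ belongs to $\cF$; this is enough to conclude that $\eta$ is an $\ovl{\RR}$-valued random variable.

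The key observation is the identity
\[
\{\omega:\eta(\omega)<c\}=\pi_\Om\bigl(Gr(C)\cap f^{-1}((-\infty,c))\bigr),
\]
where $\pi_\Om$ denotes the coordinate projection $\Om\times X\to\Om$. The equivalence is direct: $\eta(\omega)<c$ holds iff some $x\in C(\omega)$ satisfies $f(\omega,x)<c$, which is exactly the existence of $x$ with $(\omega,x)\in Gr(C)\cap f^{-1}((-\infty,c))$.

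Next I verify that the set being projected lies in the product $\si$-algebra $\cF\otimes\cB(X)$. Since $(\Om,\cF,\PP)$ is complete, $X$ is Polish, and $C:\Om\to C(X)$ is measurable, Theorem \ref{3.5HM} yields $Gr(C)\in\cF\otimes\cB(X)$. By the hypothesis that $f$ is a measurable random function, i.e.\ jointly measurable, $f^{-1}((-\infty,c))\in\cF\otimes\cB(X)$ as well, so the intersection belongs to $\cF\otimes\cB(X)$.

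The final step — and the main obstacle — is to conclude that the $\pi_\Om$-projection of an element of $\cF\otimes\cB(X)$ actually lies in $\cF$. This is the content of the measurable projection theorem, whose use is decisive here: projections of product-measurable sets are a priori only analytic, and it is precisely the completeness of $(\Om,\cF,\PP)$ that upgrades such analytic sets to $\cF$-measurable ones. Applying the projection theorem to the set above gives $\{\omega:\eta(\omega)<c\}\in\cF$, finishing the argument. (If one additionally knew that $f(\omega,\cdot)$ were lower semicontinuous one could bypass projection entirely by taking a Castaing representation $\{\xi_n\}$ of $C$ from Theorem \ref{III.8CV} and writing $\eta(\omega)=\inf_n f(\omega,\xi_n(\omega))$, each summand being measurable by composition; however, joint measurability alone does not justify that identity, so the projection route is the natural one.)
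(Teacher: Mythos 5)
Your argument is exactly the paper's: identify $\{\omega:\eta(\omega)<c\}$ as the projection onto $\Omega$ of $Gr(C)\cap\{(\omega,x):f(\omega,x)<c\}$, check that this set lies in $\cF\otimes\cB(X)$ via Theorem \ref{3.5HM} and the joint measurability of $f$, and invoke the measurable projection theorem (Castaing--Valadier) using the completeness of the probability space. The proposal is correct and follows the same route as the paper.
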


\begin{proof}
We need to show that the set $\{\omega| \eta(\omega)<t\}$ belongs to $\cF$ for any $t\in \RR$. Indeed, for each $t\in \RR,$
$$\{\omega| \eta(\omega)<t\}=Proj_{\Omega}\big(Gr(C)\cap\{(\omega,x)|f(\omega, x)<t\}\big),$$
where $Proj_A(B)$ denotes the projection of $B$ to $A$.  By the measurability of random function $f$, the set $\{(\omega,x)|f(\omega, x)<t\}$ belongs to $ \cF\otimes\cB(\RR).$ From Theorem \ref{3.5HM} and the measurability of $C$, $Gr(C)$ belongs to  $ \cF\otimes\cB(\RR).$ So, we have $Gr(C)\cap\{(\omega,x)|f(\omega, x)<t\}\in \cF\otimes\cB(\RR).$ By projection theorem due to Castaing and Valadier (\cite{CV}, Theorem III.23) we get $$Proj_{\Omega}\big(Gr(C)\cap\{(\omega,x)|f(\omega, x)<t\}\big)\in \cF.$$ The proof is complete.
\end{proof}
In partcular, when $C(\omega)=X\ \forall\omega\in\Omega$, we have the following corollary.
\begin{corollary}\label{mmc1}
Let $(\Omega, \mathcal F, \PP)$ be a complete probability space, $X$ be a Polish space and $f: \Omega\times X\to \RR$ be a measurable random function. Then the mapping $\eta:\Omega\to \RR$ defined by $\eta(\omega)=\underset{x\in X}{Inf} f(\omega, x)\ \forall \omega\in \Omega$ is a random variable. 
\end{corollary}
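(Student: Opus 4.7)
The plan is to derive this corollary as an immediate specialization of Theorem \ref{mm} obtained by taking the feasible set $C(\omega)$ to be all of $X$. I would begin by defining $C:\Omega\to C(X)$ by $C(\omega)\equiv X$ and then verifying that this constant set-valued mapping satisfies the hypotheses of Theorem \ref{mm}: namely, that $C(\omega)$ is a nonempty closed subset of the Polish space $X$ for each $\omega$, and that $C$ itself is measurable in the sense recalled in the introduction.

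The first requirement is immediate, since $X$ is closed in itself and (tacitly) nonempty. For the measurability of $C$, I would observe that for any open $B\subset X$ the preimage $C^{-1}(B)=\{\omega\mid X\cap B\neq\emptyset\}$ equals $\Omega$ whenever $B\neq\emptyset$ and equals $\emptyset$ otherwise, and both of these sets lie in $\mathcal F$. Hence $C$ is a measurable mapping from $\Omega$ into $C(X)$.

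With these hypotheses in place, Theorem \ref{mm} applied to the pair $(f,C)$ yields that $\omega\mapsto\underset{x\in C(\omega)}{Inf}f(\omega,x)=\underset{x\in X}{Inf}f(\omega,x)$ is a real-valued random variable, which is precisely the assertion of the corollary. I do not expect any genuine obstacle here: the substantive work, namely invoking the Castaing--Valadier projection theorem to conclude that the projection onto $\Omega$ of $Gr(C)\cap\{(\omega,x)\mid f(\omega,x)<t\}$ belongs to $\mathcal F$, has already been carried out inside the proof of Theorem \ref{mm}. Specializing to $C(\omega)\equiv X$ only simplifies that step, since in this case $Gr(C)=\Omega\times X$ is trivially an element of $\mathcal F\otimes\cB(X)$ and no appeal to Theorem \ref{3.5HM} is needed to establish graph measurability.
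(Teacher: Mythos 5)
Your proposal is correct and matches the paper's route exactly: the paper obtains this corollary as the specialization of Theorem \ref{mm} to the constant mapping $C(\omega)=X$, and your verification that this constant mapping is measurable and closed-valued is the only (trivial) detail the paper leaves implicit.
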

The following corollary is implied directly from Theorem \ref{mm} and the fact that a continuous function defined on a compact set attains minimum value. 
\begin{corollary}\label{mmc2}
Let $(\Omega, \mathcal F, \PP)$ be a complete probability space, $X$ be a Polish space. Suppose that $C:\Omega\to K(X)$ is a measurable mapping and $f: \Omega\times X\to \RR$ is a continuous random function. Then the mapping $\eta:\Omega\to \RR$ defined by $\eta(\omega)=\underset{x\in C(\omega)}{Min} f(\omega, x)\ \forall \omega\in \Omega$ is a random variable. 
\end{corollary}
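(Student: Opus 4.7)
The plan is to reduce the claim directly to Theorem \ref{mm} by verifying its hypotheses in the present setting, and to observe that, because of compactness, the infimum in the definition of $\eta$ is in fact attained as a minimum.

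First I would address the joint measurability of $f$. By definition, a continuous random function $f:\Omega\times X\to\RR$ is measurable in $\omega$ for each fixed $x\in X$ and continuous in $x$ for each fixed $\omega\in\Omega$. Since $X$ is a Polish space (in particular separable), Theorem \ref{6.1HM} applies and yields that $f$ is jointly measurable with respect to $\cF\otimes\cB(X)$, which is precisely the notion of measurable random function required by Theorem \ref{mm}.

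Next I would reconcile the two set-valued frameworks. The mapping $C$ is assumed to take values in $K(X)$, but in a Hausdorff (in particular, metric) space every compact set is closed, so $C(\omega)\in C(X)$ for every $\omega\in\Omega$. Moreover, the measurability of a set-valued mapping is defined through the condition $\{\omega\mid C(\omega)\cap B\neq\emptyset\}\in\cF$ for each open $B\subset X$, which depends only on the point-set values of $C$, not on the target family. Hence $C$ is also a measurable mapping $\Omega\to C(X)$.

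With these two ingredients in hand, Theorem \ref{mm} directly gives that the function $\omega\mapsto\inf_{x\in C(\omega)} f(\omega,x)$ is a random variable. Finally, for each $\omega\in\Omega$, the set $C(\omega)$ is compact and $f(\omega,\cdot)$ is continuous, so by the classical Weierstrass theorem the infimum on $C(\omega)$ is attained, and therefore
\[
\eta(\omega)=\min_{x\in C(\omega)} f(\omega,x)=\inf_{x\in C(\omega)} f(\omega,x),
\]
which shows that the $\eta$ of the corollary coincides with the random variable produced by Theorem \ref{mm}. There is no essential obstacle here; the only points that require care are checking that continuity in $x$ together with measurability in $\omega$ upgrades to joint measurability (handled by Theorem \ref{6.1HM}) and noting that $K(X)\subset C(X)$ so that the measurability hypothesis of Theorem \ref{mm} is automatically satisfied.
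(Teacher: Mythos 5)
Your proof is correct and follows essentially the same route as the paper, which derives the corollary directly from Theorem \ref{mm} (after noting via Theorem \ref{6.1HM} that a continuous random function is measurable and that $K(X)\subset C(X)$) together with the Weierstrass theorem guaranteeing the infimum is attained. You have merely spelled out the details the paper leaves implicit.
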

Noting that the measurability of random function in Theorem \ref{mm} is only sufficient not necessary condition as showed in  the following example.

\begin{example}\label{nm}
Let $(\Omega, \cF, \PP)$ be a complete probability space where $\Omega=\{0;1\}, \cF=\{\emptyset, \Omega\}$ and $D$ be a non-Borel subset of $\RR$. We define the mapping $f:\Omega\times\RR\ri\RR$ as follows 
$$f(0, x)=f(1,x)=\begin{cases}
0&if \ \ \  x\in D \\ 
1& if \ \ \ x\in \ovl D\\ 
\end{cases}$$
where $\ovl D= \RR\setminus D.$ 
\end{example}
For each $x\in \RR$, we have $f(\omega, x)=0\ \forall\omega$ as $x\in D$ and $f(\omega, x)=1\ \forall\omega$ as $x\in \ovl D$. Thus, $f$ is a random function. For any $\omega\in\Omega, \eta(\omega)=\underset{x\in \RR}{Inf} f(\omega, x)=0$ so $\eta$ is a random variable. However, $f$ is not a measurable random function. Indeed, we have $B=\{0\}\in \cB(\RR)$ and $f^{-1}(B)=\Omega\times D\notin \cF\otimes\cB(\RR)$.

\begin{theorem}\label{ms} Let $(\Omega, \mathcal F, \PP)$ be a complete probability space and $X$ be a Polish space.  Suppose that $f: \Omega\times X\to \RR$ is a measurable random function and $C:\Omega\to C(X)$ is a measurable mapping. If random optimization problem 
\begin{equation}\label{op}
 \underset{x\in C(\omega)}{Inf} f(\omega, x)
\end{equation}
has a deterministic solution then it also has a random solution.
\end{theorem}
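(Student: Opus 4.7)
The plan is to reduce the constrained random optimization problem to the unconstrained random equation setting covered by Theorem \ref{2.3TA}. First, I would introduce the optimal value function $\eta(\omega)=\inf_{x\in C(\omega)} f(\omega,x)$; by Theorem \ref{mm}, this $\eta$ is a real-valued random variable. Any deterministic solution $\xi_0$ of \eqref{op} then satisfies $\xi_0(\omega)\in C(\omega)$ and $f(\omega,\xi_0(\omega))=\eta(\omega)$ for every $\omega\in\Omega$.

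Next, I would encode the set-valued constraint into a new random function $g:\Omega\times X\to\RR$ defined by
$$g(\omega,x)=\begin{cases} f(\omega,x)-\eta(\omega) & \text{if } (\omega,x)\in Gr(C),\\ 1 & \text{otherwise}.\end{cases}$$
By Theorem \ref{3.5HM} the graph $Gr(C)$ belongs to $\cF\otimes\cB(X)$, and on this graph $(\omega,x)\mapsto f(\omega,x)-\eta(\omega)$ is $\cF\otimes\cB(X)$-measurable because $f$ is a measurable random function and $\eta$ is a random variable. The constant $1$ is measurable on the complement, which also lies in $\cF\otimes\cB(X)$; hence $g$ is itself a measurable random function.

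Then I would invoke the equivalence supplied by Theorem \ref{2.3TA}. Because $\xi_0$ is a deterministic solution of \eqref{op}, we have $g(\omega,\xi_0(\omega))=0$ for every $\omega$, i.e., the random equation $g(\omega,x)=0$ admits a deterministic solution. Theorem \ref{2.3TA} applied to $g$ and the random variable identically equal to zero therefore produces a random solution $\xi:\Omega\to X$. Since $g$ takes the value $1$ off $Gr(C)$, the identity $g(\omega,\xi(\omega))=0$ forces $\xi(\omega)\in C(\omega)$ together with $f(\omega,\xi(\omega))=\eta(\omega)$ for every $\omega$, which is exactly what is required for $\xi$ to be a random solution of \eqref{op}.

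The main obstacle I anticipate is securing the joint measurability of the auxiliary function $g$. A seemingly more direct route would be to study the minimizer multifunction $\omega\mapsto\{x\in C(\omega):f(\omega,x)=\eta(\omega)\}$ and feed it into Theorem \ref{III.8CV}, but without continuity of $f$ this set need not be closed, so Theorem \ref{III.8CV} does not apply. Routing the argument through the piecewise $g$ sidesteps the closure issue and converts it into a routine measurability check, precisely the kind of check for which Theorem \ref{3.5HM} was tailored.
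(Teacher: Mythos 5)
Your proposal is correct, and it follows the same basic strategy as the paper: form the optimal-value random variable $\eta(\omega)=\underset{x\in C(\omega)}{Inf}\, f(\omega,x)$ via Theorem \ref{mm} and then reduce to a random equation handled by Theorem \ref{2.3TA}. The difference is in how the constraint $\xi(\omega)\in C(\omega)$ is enforced. The paper applies Theorem \ref{2.3TA} directly to the equation $f(\omega,x)=\eta(\omega)$ posed on all of $\Omega\times X$; read literally, that only yields a random $\xi$ with $f(\omega,\xi(\omega))=\eta(\omega)$, and such a $\xi(\omega)$ could a priori lie outside $C(\omega)$ (the infimal value over $C(\omega)$ may also be attained at points of $X\setminus C(\omega)$). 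Your auxiliary function $g$, equal to $f(\omega,x)-\eta(\omega)$ on $Gr(C)$ and to $1$ off it, closes exactly this loophole: $g(\omega,x)=0$ characterizes precisely the pairs with $x\in C(\omega)$ and $f(\omega,x)=\eta(\omega)$, and your measurability check for $g$ (joint measurability of $f$, measurability of $\eta$, and $Gr(C)\in\cF\otimes\cB(X)$ by Theorem \ref{3.5HM}) is sound, so Theorem \ref{2.3TA} applies to $g$ with right-hand side $0$. In short, your argument is a more careful version of the paper's and actually repairs a step the paper glosses over; your closing remark about why the minimizer multifunction cannot simply be fed into Theorem \ref{III.8CV} (its values need not be closed without continuity of $f$) is also well taken.
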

\begin{proof}
For each $\omega\in\Omega$, denote $ \underset{x\in C(\omega)}{Inf} f(\omega, x)$ by $\eta(\omega)$. By Theorem \ref{mm}, $\eta$ is a random variable. The random solutions to the problem \eqref{op} are also the random solutions of random equation $f(\omega, x)=\eta(\omega)$. The existence of deterministic solution to random optimization problem  $ \underset{x\in C(\omega)}{Inf} f(\omega, x)$ implies that the random equation $f(\omega, x)=\eta(\omega)$ has a deterministic solution.  By Theorem \ref{2.3TA}, the proof is completed.
\end{proof}
It is the fact that a continuous function defined on a compact set in metric space attains minimum value. So we have the following result.
\begin{corollary}\label{mmcf}
Let $(\Omega, \mathcal F, \PP)$ be a complete probability space, $X$ be a Polish space, $C:\Omega\to K(X)$ be a measurable mapping and $f: \Omega\times X\to \RR$ be a continuous random function. Then the random optimization problem $\underset{x\in C(\omega)}{Min} f(\omega, x)$ has a random solution.  
\end{corollary}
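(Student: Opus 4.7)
The plan is to reduce this corollary to Theorem \ref{ms} by verifying its three hypotheses: that $f$ is a measurable random function, that $C$ is a measurable $C(X)$-valued mapping, and that a deterministic solution exists. Once these are in place, the random solution is produced immediately by invoking Theorem \ref{ms}.

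First I would upgrade continuity to joint measurability. The random function $f$ is by assumption continuous in $x$ and, being a random function, measurable in $\omega$, so Theorem \ref{6.1HM} applies and gives that $f:\Omega\times X\to\RR$ is jointly $\cF\otimes\cB(X)$-measurable. Next, since in any metric space every compact subset is closed, the inclusion $K(X)\subset C(X)$ holds, so the hypothesis $C:\Omega\to K(X)$ measurable forces $C$ to be a measurable $C(X)$-valued mapping in the sense required by Theorem \ref{ms}.

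The remaining ingredient is a deterministic solution. For each fixed $\omega\in\Omega$, the function $f(\omega,\cdot)$ is continuous on the compact set $C(\omega)$, so by the classical Weierstrass theorem the set
\[
A(\omega)=\bigl\{x\in C(\omega)\,:\, f(\omega,x)=\min_{y\in C(\omega)}f(\omega,y)\bigr\}
\]
is nonempty. Choosing (by the axiom of choice) any $\xi(\omega)\in A(\omega)$ for each $\omega$ yields a deterministic solution $\xi:\Omega\to X$ to the random optimization problem $\min_{x\in C(\omega)}f(\omega,x)$. I would not attempt here to make $\xi$ itself measurable: the point of the reduction is precisely that measurability of a deterministic selection is not needed.

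Finally, with $f$ a measurable random function, $C$ a measurable closed-valued mapping, and a deterministic solution in hand, Theorem \ref{ms} produces a random solution to $\min_{x\in C(\omega)}f(\omega,x)$, completing the proof. I do not anticipate a real obstacle: every hypothesis of Theorem \ref{ms} is essentially handed to us, the only mildly non-trivial observation being the Carathéodory-type measurability upgrade from Theorem \ref{6.1HM} which compensates for the fact that ``continuous random function'' is not assumed jointly measurable a priori.
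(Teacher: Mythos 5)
Your proof is correct and follows essentially the same route as the paper: the paper's (very terse) justification is precisely that Weierstrass yields a deterministic solution on each compact $C(\omega)$, that continuity plus Theorem \ref{6.1HM} gives joint measurability of $f$, and that $K(X)\subset C(X)$, so Theorem \ref{ms} applies. Your write-up simply makes these steps explicit.
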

Noting that the measurability of random function in Theorem \ref{ms} is not necessary condition to ensure the existence of a random solution. Indeed, let $f: \Omega\times \RR\to \RR$ be a random function defined in Example \ref{nm}. Then, $f$ is non-measurable random function. However, $\xi(\omega)=c\in D$ for any $\omega\in\Omega$ is a random solution to random optimization problem $\underset{x\in \RR}{Inf} f(\omega, x).$\par
We now consider random local optimization problems.  
\begin{definition}\label{lrop}
Let $(\Omega, \mathcal F, \PP)$ be a probability space, $X$ be a separable metric space and  $f: \Omega\times X\to \RR$ be a random function. We call mapping $\xi:\Omega\to X$ a deterministic solution to random local optimization problem denoted by $RLOP(f, X)$ if, for each $\omega\in \Omega$, there exists a positive real number $\delta(\omega)$ such that 
\begin{equation}\label{rlopeq}
\underset{x\in B(\xi(\omega), \delta(\omega))}{Inf} f(\omega, x)=f(\omega,\xi(\omega)),
\end{equation}
where $B(\xi(\omega), \delta(\omega))$ is an open ball of center $\xi(\omega)$ and radius $\delta(\omega).$ We call mapping $\xi:\Omega\to X$ a random solution if it is a deterministic solution to $RLOP(f, X)$ and is also a random variable.
\end{definition}
Example \ref{rs} shows that not any deterministic solution to a random local optimization problem is a random one. Before giving some conditions ensuring the existence of random solutions to $RLOP(f, X)$, we recall some basic tools.\par
A symmetric matrix $H\in \RR^{n\times n}$ is said to be positive definite if  $v^t H v > 0$ for all nonzero vector $v\in \RR^n$ and is said to be positive semi-definite if  $v^t H v \geq 0$ for all vector $v\in \RR^n$.  Sylvester's criterion says that a symmetric matrix $H$ is positive definite if and only if determinants of all submatrices taken from the top left corner of $H$ are positive. We denote by $H\succ 0$ and $H\succcurlyeq 0$ the positive definite matrix and the  positive semi-definite one, successively.  \par

Let $f: \RR^n\to \RR$ be a multivariate real function. We denote by $g(x)$ and $H(x)$ the gradient and Hessian matrix of second partial derivatives of $f$ evaluated at $x\in \RR^n$, successively. We have $g(x)=(g_1(x),..., g_n(x))$ and $H(x)=(h_{ij}(x))_{n\times n}$, where $g_i(x)=\dfrac{\partial f(x)}{\partial  x_i}, i=1, ..., n$  and $h_{ij}(x)=\dfrac{\partial^2 f(x)}{\partial  x_i\partial  x_j}, i, j=1, ..., n$. If $f$ is twice continuously differentiable then $H(x)$ is a symmetric matrix. \par
We now consider the case of random function defined on finite dimensional Euclidean space $\RR^n.$ From now we suppose that $(\Omega, \mathcal F, \PP)$ is a complete probability space. The following propositions give necessary conditions for a random variable to become random solution to a random local optimization problem.
\begin{proposition}\label{pr1}
Let  $f: \Omega\times \RR^n\to \RR$ be a random function such that $f(\omega,x)$ is continuously differentiable in $x$ for each $\omega\in\Omega$. If  random variable $\xi:\Omega\to \RR^n$ is a random solution to random local optimization problem $RLOP(f, \RR^n)$ then $g(\omega,\xi(\omega))=0$ for each $\omega\in\Omega.$
\end{proposition}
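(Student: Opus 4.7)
My plan is to reduce the claim to the classical Fermat first-order necessary condition from multivariate calculus, applied pointwise in $\omega$. The random and measurable-selection machinery plays essentially no role here: the measurability of $\xi$ is never used in the conclusion, so the argument is really a sample-path argument that would work for any deterministic solution to $RLOP(f,\RR^n)$.

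First I would fix an arbitrary $\omega\in\Omega$. Because $\xi$ is a random solution, Definition \ref{lrop} supplies a radius $\delta(\omega)>0$ with
\[
\underset{x\in B(\xi(\omega),\delta(\omega))}{\inf} f(\omega,x)=f(\omega,\xi(\omega)),
\]
so, for this fixed $\omega$, the point $\xi(\omega)$ is a local minimizer of the deterministic, continuously differentiable map $x\mapsto f(\omega,x)$ on $\RR^n$. From this point on, $\omega$ is frozen and the statement is purely a deterministic calculus fact.

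Next I would invoke the standard Fermat argument. Suppose for contradiction that $g(\omega,\xi(\omega))\neq 0$ and set $v=-g(\omega,\xi(\omega))$. Using differentiability of $f(\omega,\cdot)$ at $\xi(\omega)$, the first-order expansion gives
\[
f(\omega,\xi(\omega)+tv)=f(\omega,\xi(\omega))-t\,\|g(\omega,\xi(\omega))\|^2+o(t).
\]
For all sufficiently small $t>0$ one has $\xi(\omega)+tv\in B(\xi(\omega),\delta(\omega))$ together with $f(\omega,\xi(\omega)+tv)<f(\omega,\xi(\omega))$, contradicting local minimality. Therefore $g(\omega,\xi(\omega))=0$, and since $\omega$ was arbitrary the conclusion follows for every $\omega\in\Omega$.

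I do not anticipate any genuine obstacle: the content is purely the deterministic first-order optimality condition, executed one sample path at a time. The only minor subtlety worth flagging is that the measurability hypothesis on $\xi$ is in fact superfluous for the conclusion; it is a hypothesis inherited from the definition of \emph{random} solution, but the pointwise statement $g(\omega,\xi(\omega))=0$ would hold equally for any deterministic solution. This is consistent with the spirit of Example \ref{rs}, where the subtle issue for random solutions is measurability and existence, not the first-order condition itself.
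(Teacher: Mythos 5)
Your proof is correct and is precisely the ``standard argument as in classical analysis'' that the paper invokes without writing out: fix $\omega$, observe that $\xi(\omega)$ is a local minimizer of the $C^1$ function $f(\omega,\cdot)$, and apply Fermat's first-order condition. Your side remark that the measurability of $\xi$ is not actually used is also accurate.
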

In case of twice continuously differentiable random function, we have.
\begin{proposition}\label{pr2}
Let  $f: \Omega\times \RR^n\to \RR$ be a random function such that $f(\omega,x)$ is twice continuously differentiable in $x$ for each $\omega\in\Omega$. If  random variable $\xi:\Omega\to \RR^n$ is a random solution to random local optimization problem $RLOP(f, \RR^n)$ then $g(\omega,\xi(\omega))=0$ and $H(\omega,\xi(\omega))$ is positive semi-definite for each $\omega\in\Omega$.
\end{proposition}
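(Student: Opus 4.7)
The plan is to reduce both assertions to pointwise deterministic second-order necessary conditions; the fact that $\xi$ is a random variable plays no role in the necessary condition itself, it only appears in the hypothesis so that the statement is non-vacuous in the random setting.

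First, I would observe that the hypotheses of Proposition \ref{pr1} are already satisfied — twice continuous differentiability implies continuous differentiability — so a direct appeal to Proposition \ref{pr1} yields $g(\omega,\xi(\omega))=0$ for every $\omega\in\Omega$. This disposes of the first-order part of the conclusion and leaves only the semi-definiteness to verify.

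For the positive semi-definiteness of $H(\omega,\xi(\omega))$, I would fix an arbitrary $\omega\in\Omega$ together with an arbitrary $v\in\RR^n$. Writing $x_0=\xi(\omega)$ and letting $\delta=\delta(\omega)>0$ be the radius supplied by \eqref{rlopeq}, whenever $0<t<\delta/(\|v\|+1)$ the point $x_0+tv$ lies in $B(x_0,\delta)$, so \eqref{rlopeq} gives
$$ f(\omega, x_0+tv) - f(\omega, x_0) \geq 0. $$
Since $f(\omega,\cdot)$ is twice continuously differentiable on $\RR^n$ and $g(\omega,x_0)=0$ by the previous step, the second-order Taylor expansion of $f(\omega,\cdot)$ at $x_0$ reads
$$ f(\omega, x_0+tv) - f(\omega, x_0) = \frac{t^2}{2}\, v^t H(\omega, x_0) v + o(t^2) \qquad (t\to 0^+). $$
Dividing by $t^2$ and letting $t\to 0^+$ forces $v^t H(\omega,x_0)v \geq 0$. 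Since $v\in\RR^n$ is arbitrary, $H(\omega,\xi(\omega))\succcurlyeq 0$.

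There is no serious obstacle here: this is exactly the textbook proof of the classical second-order necessary optimality condition executed on each fibre $\{\omega\}\times\RR^n$. The only mild point to keep track of is that the radius $\delta(\omega)$ varies with $\omega$, but because both $\omega$ and $v$ are held fixed throughout the Taylor argument, the admissibility $x_0+tv\in B(x_0,\delta(\omega))$ for all sufficiently small $t>0$ is automatic.
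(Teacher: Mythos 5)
Your proof is correct and is exactly the ``standard arguments as in classical analysis'' that the paper invokes without writing out: it defers the first-order condition to Proposition \ref{pr1} and runs the textbook second-order necessary condition fibrewise, with the only random-setting subtlety (the $\omega$-dependence of $\delta(\omega)$) correctly handled by fixing $\omega$ before taking $t\to 0^+$. Nothing further is needed.
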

The proofs of Proposition \ref{pr1} and Proposition \ref{pr2} can be given by some standard arguments as in classical analysis.\par
The following theorem gives a sufficient condition for the existence of a random solution to random local optimization problem. 
\begin{theorem}\label{mslrop}
Let  $f: \Omega\times \RR^n\to \RR$ be a random function such that $f(\omega,x)$ is twice continuously differentiable in $x$ for each $\omega\in\Omega$. If  there exits a mapping $\xi:\Omega\to \RR^n$ such that $g(\omega,\xi(\omega))=0$ and $H(\omega,\xi(\omega))$ is positive definite for each $\omega\in\Omega$ then the random local optimization problem $RLOP(f, \RR^n)$ has a random solution.
\end{theorem}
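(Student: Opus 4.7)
The plan is to convert the deterministic existence hypothesis into a measurable-selection statement. The naive candidate $D(\omega)=\{x\in\RR^n:g(\omega,x)=0,\ H(\omega,x)\succ 0\}$ is nonempty for every $\omega$ by assumption, but it need not be closed in $x$, because strict positive definiteness is an open condition. I would therefore exhaust $D(\omega)$ by the countable family
\[
D_k(\omega)=\Bigl\{x\in\RR^n:g(\omega,x)=0\text{ and }H(\omega,x)-\tfrac{1}{k}I\succcurlyeq 0\Bigr\},\qquad k=1,2,\ldots,
\]
each of which is closed in $x$ (by continuity of $g(\omega,\cdot)$ and $H(\omega,\cdot)$) and whose union is $D(\omega)$. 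Setting $A_k=\{\omega:D_k(\omega)\neq\emptyset\}$, the positive definiteness of $H(\omega,\xi(\omega))$ produces a spectral gap $1/k$ for $k$ large enough depending on $\omega$, so $\bigcup_k A_k=\Omega$.

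The next step is joint measurability of the maps that define $D_k$. For each fixed $x$ the partial derivative $g_i(\omega,x)$ is the limit along $n\to\infty$ of the measurable difference quotients $n\bigl(f(\omega,x+e_i/n)-f(\omega,x)\bigr)$, so $g_i(\cdot,x)$ is measurable; combined with continuity in $x$, Theorem \ref{6.1HM} yields joint measurability of $g_i$. The same argument applies to each $h_{ij}$. Since the smallest eigenvalue of $H(\omega,x)-\tfrac{1}{k}I$ depends continuously on the entries of the matrix, the Hessian condition defining $D_k$ is also jointly measurable. Consequently $Gr(D_k)\in\cF\otimes\cB(\RR^n)$, and the Castaing--Valadier projection theorem (as used in the proof of Theorem \ref{mm}) yields $A_k\in\cF$.

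Now extend $D_k$ to all of $\Omega$ by setting $D_k(\omega)=\{0\}$ on $A_k^c$; the extension is still closed-valued with measurable graph, so by Theorem \ref{3.5HM} it is a measurable closed-valued map, and Theorem \ref{III.8CV} furnishes a measurable selection $\xi_k$, which lies in the original $D_k(\omega)$ whenever $\omega\in A_k$. Put $B_1=A_1$ and $B_k=A_k\setminus\bigcup_{j<k}A_j$ for $k\geq 2$; the $B_k$ form a measurable partition of $\Omega$. Define $\zeta(\omega)=\xi_k(\omega)$ for $\omega\in B_k$; then $\zeta$ is a random variable.

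For each $\omega$ there is a unique $k$ with $\omega\in B_k$, and by construction $g(\omega,\zeta(\omega))=0$ together with $H(\omega,\zeta(\omega))\succcurlyeq \tfrac{1}{k}I\succ 0$. By continuity of $H(\omega,\cdot)$ the matrix $H(\omega,y)$ remains positive definite for $y$ in some open ball $B(\zeta(\omega),\delta(\omega))$, and the second-order Taylor expansion of $f(\omega,\cdot)$ at $\zeta(\omega)$ then forces $\zeta(\omega)$ to be a strict local minimum on that ball, establishing \eqref{rlopeq}. The main obstacle I anticipate is precisely the closedness issue noted in the first paragraph: a single closed-valued measurable set-valued map whose fibres consist of strict second-order local minima is not directly available, and the countable exhaustion $\{D_k\}$ with an explicit spectral gap is the device that makes Theorems \ref{3.5HM} and \ref{III.8CV} applicable.
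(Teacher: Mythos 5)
Your proof is correct, and it takes a genuinely different route from the paper's --- in fact it quietly repairs a flaw in the paper's own argument. The paper works with $D(\omega)=\{x\in\RR^n\mid H(\omega,x)\succ 0\}$ and asserts that continuity of the entries $h_{ij}(\omega,\cdot)$ makes $D(\omega)$ closed; this is backwards, since strict positive definiteness is an \emph{open} condition (already for $f(\omega,x)=x^4-2x^2$ one gets $D(\omega)=\{x:12x^2-4>0\}$, which is open and not closed), so the subsequent appeals to Theorem \ref{3.5HM} and to the Castaing selection theorem \ref{III.8CV} --- both of which require closed values --- are not justified for $D$ or for $M=C\cap D$. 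You identify exactly this obstruction and circumvent it with the countable exhaustion $D_k(\omega)=\{x:g(\omega,x)=0,\ H(\omega,x)-\tfrac{1}{k}I\succcurlyeq 0\}$, whose members are genuinely closed, together with the projection theorem to get $A_k\in\cF$, the $\{0\}$-extension to restore nonemptiness, and a measurable partition $\{B_k\}$ to patch the selections $\xi_k$ into a single random variable $\zeta$; the final Taylor-expansion step coincides with the paper's. What the paper's approach buys, when repaired, is brevity --- one could instead note that $Gr(M)$ is product-measurable with nonempty fibres and invoke the von Neumann--Aumann selection theorem for graph-measurable multifunctions over complete measure spaces --- but your argument achieves the same end using only the closed-valued selection theorem and projection theorem already cited in the paper, at the cost of the bookkeeping with $A_k$ and $B_k$. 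The only minor point worth polishing is that your difference-quotient argument for measurability of $g_i(\cdot,x)$ and $h_{ij}(\cdot,x)$ in $\omega$ (which the paper glosses over entirely) should say that the quotients converge pointwise for each fixed $x$ by the assumed differentiability, so the limits are measurable in $\omega$; everything else is in order.
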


\begin{proof}
Firstly, we show that there exits a random variable $\psi: \Omega\to \RR^n$ such that $g(\omega,\psi(\omega))=0$ and $H(\omega,\psi(\omega))$ is positive definite for each $\omega\in\Omega$. To do this, we need the following lemma.
\begin{lemma}\label{mi}
Let $(\Omega, \mathcal F, \PP)$ be a complete probability space, $X$ be a Polish space and $C_i:\Omega\to C(X)$ be a measurable mapping for each $i\in N$. Then the mapping $C:\Omega\to C(X)$ defined by $C(\omega)=\underset{i}{\cap}C_i(\omega)$ is a measurable mapping. 
\end{lemma}
\begin{proof}
We will show that $Gr(C)$ is a measurable set. We have 
\begin{align*}
Gr(C)&=\{(\omega,x)|\omega\in\Omega, x\in C(\omega)\}=\{(\omega,x)|\omega\in\Omega, x\in \underset{i}{\cap}C_i(\omega)\}\\
&= \underset{i}{\cap}\{(\omega,x)|\omega\in\Omega, x\in C_i(\omega)\}\\
&= \underset{i}{\cap}Gr(C_i)
\end{align*}
By the measurability of $C_i$ and Theorem \ref{3.5HM}, $Gr(C_i)\in \cF\otimes\cB(X)$ for any $i\in N$. Thus $Gr(C)\in  \cF\otimes\cB(X)$. Once again, by Theorem \ref{3.5HM}, $C$ is a measurable mapping.
\end{proof}
Denote by $g_i(\omega, x)$ the $i$th partial derivative of $f(\omega,x)$, that is $g_i(\omega,x)=\dfrac{\partial f(\omega, x)}{\partial  x_i}, i=1, ..., n$ and $h_{ij}(\omega, x)=\dfrac{\partial^2 f(\omega, x)}{\partial  x_i\partial  x_j}, i, j=1, ..., n$. By the assumption, $g_i(\omega,x)$ and $h_{ij}(\omega, x)$ are continuous random functions for any $i, j=1, ..., n.$\par
Define $C_i:\Omega\to 2^{\RR^n}$ by $C_i(\omega)=\{x\in \RR^n|g_i(\omega,x)=0\}, i=1,..., n; \omega\in\Omega$. By Theorem \ref{6.4HM}, $C_i$ is a measurable mapping for any $i=1,...,n.$ By the continuity of $g_i(\omega, x)$ in $x$, and the existence of mapping  $\xi:\Omega\to \RR^n$ such that $g(\omega,\xi(\omega))=0$ we imply that $C_i(\omega)$ is a non empty closed set for each $\omega\in\Omega$.  
Let $C:\Omega\to \RR^n$ be a mapping defined by $C(\omega)=\underset{i}{\cap}C_i(\omega) =\{x\in\RR^n|g(\omega,x)=0\}$ for any $\omega\in\Omega$. By Lemma \ref{mi}, $C$ is a measurable mapping valued in $C(\RR^n)$.\par 
We denote $\Delta_k(\omega, x)$ the $k$th determinant of submatrix taken from the top left corner of $H(\omega, x), k=1,...,n$.  By the continuity of   $h_{ij}(\omega, x), i, j=1,..., n$, we imply that $(\omega,x)\mapsto\Delta_k(\omega, x)$ is a continuous random function for any $k=1,...,n.$ By Theorem \ref{6.1HM} and the continuity of $\Delta_k(\omega, x)$, it is implied that $\Delta_k(\omega, x)$ is a measurable random function and the set $\{(\omega,x)|\omega\in\Omega, x\in \RR^n, \Delta_k(\omega, x)>0\}$ is measurable for each $k=1,...,n$.
We define $D: \Omega\to 2^{\RR^n}$ by $D(\omega)=\{x\in\RR^n| H(\omega, x)\succ 0\}$. We will show that $D$ is a measurable mapping valued in $C(\RR^n)$. Indeed, because of the continuity in $x$ of all entries $h_{ij}(\omega,x)$ and the fact that $H(\omega, x)$ is positive definite as $x\in D(\omega)$, we imply that $D(\omega)$ is closed set for any $\omega\in\Omega$. By the existence of mapping $\xi:\Omega\to \RR^n$ such that $H(\omega,\xi(\omega))$ is positive definite for each $\omega\in\Omega$, $D(\omega)$ is non empty set.  By Sylvester's criterion, we have 
\begin{align*}
Gr(D)&=\{(\omega,x)|\omega\in\Omega, x\in D(\omega)\}=\{(\omega,x)|\omega\in\Omega, x\in\RR^n, H(\omega,x)\succ 0\}\\
&= \underset{k}{\cap}\{(\omega,x)|\omega\in\Omega, x\in \RR^n, \Delta_k(\omega, x)>0\}.
\end{align*}
Thus, $Gr(D)$ belongs to $\cF\otimes\cB(\RR^n)$. By Theorem \ref{3.5HM}, $D$ is a measurable mapping valued in $C(\RR^n)$.\par 
Set $M:\Omega\to C(\RR^n)$ defined by $M(\omega)=C(\omega)\cap D(\omega)$ for any $\omega\in\Omega$. By Lemma \ref{mi}, $M$ is a measurable mapping. By the existence of mapping $\xi:\Omega\to \RR^n$ in the assumption, $M(\omega)$ is a non empty closed set for any $\omega\in\Omega$. By Theorem\ref{III.8CV}, there exists a measurable selection $\psi:\Omega\to \RR^n$ of $M$, that is $\psi(\omega)\in M(\omega)$ for any $\omega\in\Omega$. For each $\omega\in\Omega$, we have $g(\omega,\psi(\omega))=0$ and $H(\omega, \psi(\omega))$ is a positive definite matrix. \par
Secondly, we will show that $\psi:\Omega\to \RR^n$ is a random solution to random local optimization problem $RLOP(f,\RR^n).$ For each fixed $\omega\in\Omega,$ by Taylor expansion, for any $d\in \RR^n$, we have
\[f(\omega, \psi(\omega) +d)=f(\omega, \psi(\omega))+g(\omega, \psi(\omega)).d +\frac12 d^t.H(\omega, \psi(\omega) +\theta.d).d \]
for some $\theta\in (0,1)$. By the continuity of $H(\omega,x)$ and the positive definiteness of $H(\omega, \psi(\omega))$, there exists $\delta(\omega)>0$ such that for any direction $d$ with $\|d\|<\delta(\omega)$ and any scalar $\theta\in (0,1)$, $ H(\omega, \psi(\omega) + \theta.d)$ is positive definite. Thus,
for any $d\in \RR^n, d\not=0$ such that $\|d\|<\delta(\omega)$,  from above expansion and $g(\omega, \psi(\omega) )=0$ we have
\[f(\omega, \psi(\omega) + d) > f(\omega, \psi(\omega)).\]
Therefore, $\psi$ is a random solution to random local optimization problem $RLOP(f, \RR^n)$. 
\end{proof}
\begin{corollary}\label{mslropcr1}
Let  $f: \Omega\times \RR^n\to \RR$ be a random function such that $f(\omega,x)$ is twice continuously differentiable in $x$ for each $\omega\in\Omega$. If  random variable $\xi:\Omega\to \RR^n$ such that $g(\omega,\xi(\omega))=0$ and $H(\omega,\xi(\omega))$ is positive definite for each $\omega\in\Omega$ then $\xi$ is a random solution to random local optimization problem $RLOP(f, \RR^n)$.
\end{corollary}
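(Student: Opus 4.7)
The plan is to observe that Corollary \ref{mslropcr1} is essentially the second half of the proof of Theorem \ref{mslrop} applied directly to $\xi$. In Theorem \ref{mslrop} the technical work lay in constructing a measurable selection $\psi$ from the intersection of the measurable mappings $C(\omega) = \{x : g(\omega,x)=0\}$ and $D(\omega) = \{x : H(\omega,x)\succ 0\}$; here the hypothesis hands us a random variable $\xi$ that already lands in $C(\omega)\cap D(\omega)$ for every $\omega$, so the entire measurable-selection machinery (Lemma \ref{mi}, Sylvester's criterion, Theorem \ref{III.8CV}) can be bypassed. All that remains is the pointwise analytic argument showing that $\xi(\omega)$ is a strict local minimizer of $f(\omega,\cdot)$.

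First I would fix an arbitrary $\omega\in\Omega$. Because $f(\omega,\cdot)$ is twice continuously differentiable on $\RR^n$, Taylor's theorem with Lagrange remainder gives, for every $d\in\RR^n$,
\[
f(\omega,\xi(\omega)+d) \;=\; f(\omega,\xi(\omega)) \;+\; g(\omega,\xi(\omega))\cdot d \;+\; \tfrac12\, d^t\, H(\omega,\xi(\omega)+\theta d)\, d,
\]
for some $\theta=\theta(\omega,d)\in(0,1)$. The linear term vanishes by $g(\omega,\xi(\omega))=0$.

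Second, I would use the continuity of $x\mapsto H(\omega,x)$ at $\xi(\omega)$ together with the openness of the cone of positive definite symmetric matrices (this openness is immediate from Sylvester's criterion, since the leading principal minors depend continuously on the entries of the matrix) to select $\delta(\omega)>0$ such that $H(\omega,\xi(\omega)+y)\succ 0$ for every $y$ with $\|y\|<\delta(\omega)$. Then for any nonzero $d$ with $\|d\|<\delta(\omega)$ and any $\theta\in(0,1)$, the vector $\theta d$ satisfies $\|\theta d\|<\delta(\omega)$, so $d^t H(\omega,\xi(\omega)+\theta d)\,d>0$. Plugging this into the Taylor expansion yields $f(\omega,\xi(\omega)+d) > f(\omega,\xi(\omega))$, which is precisely the local-minimum condition \eqref{rlopeq}.

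There is no serious obstacle: the delicate issue in Theorem \ref{mslrop}, namely producing a \emph{measurable} stationary point at which the Hessian is positive definite, is assumed away in the corollary, so the proof reduces to the classical second-order sufficient condition applied fiberwise. One only has to note that the radius $\delta(\omega)$ is required by Definition \ref{lrop} to exist for each $\omega$ but need not be measurable in $\omega$, so no further argument is needed to complete the proof.
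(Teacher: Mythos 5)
Your proposal is correct and matches the paper's proof, which simply invokes ``similar arguments as in the second part of the proof of Theorem \ref{mslrop}'' --- namely the fiberwise Taylor expansion with Lagrange remainder, the vanishing gradient, and the persistence of positive definiteness of $H(\omega,\cdot)$ near $\xi(\omega)$ by continuity. Your added remarks (that the measurable-selection machinery is bypassed and that $\delta(\omega)$ need not be measurable) are accurate but do not change the argument.
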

\begin{proof}
We use similar arguments as in the second part of the proof of Theorem \ref{mslrop}.
\end{proof}
\begin{corollary}\label{mslropcr2}
Let  $f: \Omega\times \RR^n\to \RR$ be a random function such that $f(\omega,x)$ is twice continuously differentiable and Hessian matrix $H(\omega, x)$ is positive semi-definite for any $x\in \RR^n$ and $\omega\in\Omega$. If  there exits a mapping $\xi:\Omega\to \RR^n$ such that $g(\omega,\xi(\omega))=0$ and $H(\omega,\xi(\omega))$ is positive definite for each $\omega\in\Omega$ then the random optimization problem $\underset{x\in \RR^n}{Inf} f(\omega, x)$ has a random solution.
\end{corollary}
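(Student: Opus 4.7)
The plan is to reduce the global random optimization problem to the random local optimization problem already handled by Theorem \ref{mslrop}, using the extra assumption $H(\omega,\cdot)\succcurlyeq 0$ on all of $\RR^n$ to convert local minimality into global minimality.

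First I would apply Theorem \ref{mslrop} to $f$ and the given mapping $\xi$: its hypotheses are satisfied verbatim, since $f(\omega,x)$ is twice continuously differentiable in $x$ and $\xi$ provides, for each $\omega$, a point with vanishing gradient and positive definite Hessian. Theorem \ref{mslrop} then produces a random variable $\psi:\Omega\to\RR^n$ which is a random solution to $RLOP(f,\RR^n)$. Reading off its construction, $\psi(\omega)$ is selected from $M(\omega)=C(\omega)\cap D(\omega)\subset\{x\in\RR^n\mid g(\omega,x)=0\}$, so in particular $g(\omega,\psi(\omega))=0$ for every $\omega\in\Omega$, and $\psi$ is measurable.

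Next I would promote $\psi$ from a local to a global minimizer by convexity. Since $H(\omega,x)\succcurlyeq 0$ for every $x\in\RR^n$ and every $\omega\in\Omega$, the deterministic function $x\mapsto f(\omega,x)$ is convex on $\RR^n$. For a convex, continuously differentiable function, the first-order condition $g(\omega,\psi(\omega))=0$ is sufficient for global optimality; explicitly the gradient inequality gives
\[ f(\omega,y) \ge f(\omega,\psi(\omega)) + g(\omega,\psi(\omega))\cdot(y-\psi(\omega)) = f(\omega,\psi(\omega)) \]
for every $y\in\RR^n$ and every $\omega\in\Omega$. Hence $\underset{x\in\RR^n}{Inf}\,f(\omega,x)=f(\omega,\psi(\omega))$ pointwise, and combined with the measurability of $\psi$ this exhibits $\psi$ as a random solution to the global problem.

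I do not anticipate a genuine obstacle: the difficult measurable-selection step (producing a stationary point with positive definite Hessian as a random variable) has already been done inside Theorem \ref{mslrop}, and the current corollary only needs the classical convex-analysis fact that a critical point of a convex differentiable function is a global minimizer. The only care point is confirming, from the construction in the proof of Theorem \ref{mslrop}, that the selected $\psi$ indeed lies in $C(\omega)$ so that $g(\omega,\psi(\omega))=0$ everywhere, which is immediate from the definition of $M$.
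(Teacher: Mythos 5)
Your proposal is correct and follows essentially the same route as the paper: invoke Theorem \ref{mslrop} to obtain a measurable local solution $\psi$, then use convexity of $f(\omega,\cdot)$ (from $H(\omega,x)\succcurlyeq 0$ everywhere) to upgrade it to a global minimizer. The only cosmetic difference is that you globalize via the first-order condition and the gradient inequality, whereas the paper simply cites the fact that a local minimizer of a convex function is a global one, which spares you the need to check that $g(\omega,\psi(\omega))=0$.
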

\begin{proof}
For each $\omega\in\Omega$, by the positive semi-definiteness of Hessian matrix $H(\omega, x)$ for any $x\in\RR^n$, $f(\omega,x)$ is a convex function (\cite{T}, Proposition 2.2). We know that a local minimizer of a convex function on $\RR^n$ is also a global minimizer (\cite{T}, Proposition 2.30). So, by Theorem \ref{mslrop}, random solution to random local optimization problem $RLOP(f, \RR^n)$ is also a random solution tof random global optimization problem $ROP(f, \RR^n)$.
\end{proof}
Noting that, in Theorem \ref{mslrop}, not any mapping $\xi:\Omega\to \RR^n$ such that $g(\omega,\xi(\omega))=0$ and $H(\omega, \xi(\omega))$ is positive definite is a random solution of $RLOP(f, \RR^n)$. This fact is shown in the following example.
\begin{example}
Let $(\Omega, \mathcal F, \PP)$ be a complete probability space. We define mapping $f:\Omega\times\RR\to\RR$ by $f(\omega, x)=x^4-2x^2$ for any $\omega\in\Omega, x\in\RR.$ 
\end{example}
It is clear that $f(\omega, x)$ is a random function, and for each $\omega\in\Omega$, $f(\omega, x)$ is twice continuously differentiable. We have $g(\omega, x)=4x(x^2-1)$ and $H(\omega, x)=4(3x^2-1).$ Define $\xi_1, \xi_2:\Omega\to \RR$ by $\xi_1(\omega)=1, \xi_2(\omega)=-1$ for all $\omega\in\Omega$. Because $\xi_i$ is a random variable and $g(\omega,\xi_i(\omega))=0$, $H(\omega,\xi_i(\omega))=8$ is positive definite for any $\omega\in\Omega$, $i=1, 2$. So, $\xi_1, \xi_2$ are random solutions to random local optimization problem $RLOP(f, \RR)$. \par
Let $D$ is a non measurable subset of $\Omega$. We define the mapping $\xi:\Omega\to\RR$ by
$$\xi(\omega)=
\begin{cases}
\ \ 1& \text{if}\ \  \omega\in D\\
-1&\text{if}\ \  \omega\in \overline{D}.
\end{cases}$$
It is clear that $g(\omega,\xi(\omega))=0$ and $H(\omega,\xi(\omega))=8$ is positive definite for any $\omega\in\Omega.$ However, $\xi$ is not a random solution to random local optimization problem $RLOP(f, \RR)$ because it is not a random variable.

\end{document}